\newtheorem{theorem}{Theorem}
\newtheorem{lemma}{Lemma}
\theoremstyle{definition}
\newcommand{\ba}{\begin{align}}
\newcommand{\ea}{\end{align}}
\newcommand{\fr}{\frac}
\newcommand{\lam}{\lambda}
\newcommand{\R}{{\mathbb R}}
\title{\LARGE \bf Neuron Growth Control by PDE Backstepping:\\
Axon Length Regulation by Tubulin Flux Actuation in Soma}
\author{Cenk Demir \and Shumon Koga \and Miroslav Krstic
\thanks{C. Demir and M. Krstic are with the Department of Mechanical and Aerospace Engineering, U.C. San Diego, 9500 Gilman Drive, La Jolla, CA, 92093-0411, {\tt\small cdemir@ucsd.edu} and {\tt\small krstic@ucsd.edu}.}
\thanks{S. Koga is with the Department of Electrical and Computer Engineering, UC San Diego, 9500 Gilman Drive, La Jolla, CA, 92093-0411, {\tt\small skoga@ucsd.edu}.}
}
\begin{document}

\maketitle
\thispagestyle{empty}
\pagestyle{empty}


\begin{abstract}
In this work, stabilization of an axonal growth in a neuron associated with the dynamics of tubulin concentration is proposed by designing a boundary control. The  dynamics are given by a parabolic Partial Differential Equation (PDE) of the tubulin concentration, with a spatial domain of the axon's length governed by an Ordinary Differential Equation (ODE) coupled with the tubulin concentration in the growth cone. We propose a novel backstepping method for the coupled PDE-ODE dynamics with a moving boundary, and design a control law for the tubulin concentration flux in the soma. Through employing the Lyapunov analysis to a nonlinear target system, we prove a local exponential stability of the closed-loop system under the proposed control law in the spatial $H_1$-norm. 
\end{abstract}

\section{Introduction}
Neuroscience has become one of the most significant areas in biology, as scientists pursue the understanding of the functionality of perception and the brain, and investigate therapeutics for  neurological diseases and injuries (see \cite{ribar2020neuromorphic, 7402491}). Neurological disorders, such as Parkinson's disease, spinal cord injuries, and Alzheimer's disease, result from a loss of function of neurons \cite{maccioni2001molecular}, \cite{dauer2003parkinson}, \cite{liu1997neuronal}. Particular medical therapeutics, i.e. Chondroitinase ABC (ChABC), may restore the functionality of neurons (see \cite{karimi2010synergistic, bradbury2011manipulating}) by manipulating the extracellular matrix (ECM), the area surrounding the neuron which contains various macromolecules and minerals that facilitate cell processes \cite{frantz2010extracellular}.  

Neurons are cells that are specialized to obtain perception by transmitting electrical signals along their axons
. These signals enter from the dendrites, travel through the axon, and transmit through the growth cone to another neuron as shown in Figure \ref{fig:my_label}. The presence of chemical cues surrounding the growth cone attract or repel the axon toward another neuron \cite{DIEHL2014194}. When the chemical cues attract the growth cone, tubulin proteins assemble from free tubulin dimers and create microtubules which extend the axon toward the other neuron \cite{julien1999neurofilament}. 
The formation of microtubules is determined by  the size and dynamics of tubulin concentration in the neuron and is supported by ECM \cite{DENT2003209, barros2011extracellular}. The dynamics of tubulin depend on the tubulin production rate in the soma, the degradation rate, the assembly rate, and the transportation process \cite{DIEHL2014194}. According to recent research, it is also possible to control axon elongation, namely tubulin concentration, by manipulating ECM \cite{burnside2014manipulating}.



The dynamics of axon growth have been described by several different models using various mathematical tools. One of the pioneer axonal growth models has been proposed in \cite{van1994neuritic} by considering the transportation of tubulin as diffusion. 
A continuum model of axon growth dynamics has been proposed in \cite{mclean2004continuum}, and the stability analysis for the proposed model is studied in \cite{mclean2006stability}. In another work, the tubulin concentration is modelled by a PDE with a moving boundary \cite{graham2006mathematical, diehl2014one}.  \cite{diehl2016efficient} provides a numerical solution for this moving boundary PDE model. 



While PDEs have been utilized for the computational modeling of axon growth, stabilization of axon growth by means of control theory has not been studied so far. Boundary control of PDEs has been intensively developed by the method of ``backstepping" in the last few decades for various kinds of systems \cite{krstic2008boundary}. One of the initial contributions was achieved in \cite{smyshlyaev2004closed} by applying a Volterra type of transformation to parabolic PDEs by utilizing the method of successive approximations. Following the procedure, the class of the system has been extended to a cascade and coupled PDE-ODE system, see \cite{krstic2009compensating, susto2010control, tang2011state}. While most literature on backstepping design of PDEs has focused on a system with a constant size of domain in time, recently the method has been successfully applied to the Stefan problem, which is a special class of a parabolic PDE with a moving boundary, see \cite{koga2018control, krstic2020materials}. For the model related to axon growth mentioned earlier, the backstepping method for the Stefan problem has been designed and applied to a screw extrusion process of a polymer 3-D printing \cite{koga2020stabilization}. The results mentioned above have been proven to achieve global stability by virtue of the monotonicity of the moving boundary. 

Several researchers have tackled stability analysis for nonlinear PDE systems under the backstepping design of a boundary control by restricting the region of attraction in a local sense. For instance, \cite{coron2013local} proved a local exponential stability of a $2 \times 2$ quasi-linear hyperbolic PDE under the backstepping design by analyzing a Lyapunov function of the spatial $H_2$-norm. As a class of PDEs with a moving boundary, \cite{buisson2018control}  designed a backstepping control law for $2 \times 2$ hyperbolic PDEs with a moving boundary governed by an ODE modeling a piston movement, and proved a local exponential stability in the spatial $H_1$-norm by applying the Lyapunov method to the target system. A similar approach has been done for a moving boundary hyperbolic PDE modeling a shock-wave arising in traffic congestion \cite{yu2020bilateral}. However, those results for local stability analysis have been achieved only for hyperbolic PDE systems, even though the axon growth model proposed in \cite{diehl2014one} is a nonlinear parabolic PDE system. 

In this paper, we develop a boundary control for a coupled PDE-ODE system with a moving boundary which models  the dynamics of tubulin concentration and axon growth. First, we present a steady-state solution of the tubulin concentration for a given constant axon length, and obtain a reference error system to be stabilized at zero states. Next, we apply linearization to the reference error system to deal with the algebraic nonlinearity. 
Then, a backstepping transformation is employed to the linearized reference error dynamics. By solving the gain kernel equations that are derived from backstepping transformation, the control law is obtained. Finally, we prove local exponential stability by applying the Lyapunov method to the nonlinear target system, which ensures the local stability of the original PDE-ODE system of the axon growth model.

This paper is structured as follows. Section \ref{sec:model} introduces the PDE-ODE model of axon growth and tubulin concentration with the steady-state solution. Section \ref{sec:control} presents the control design by the method of backstepping, and the stability result and its proof are given in Section \ref{sec:stability}. The paper ends with the conclusion in Section \ref{sec:conclusion}.  

 \begin{figure}
     \centering
     \includegraphics[scale=0.4]{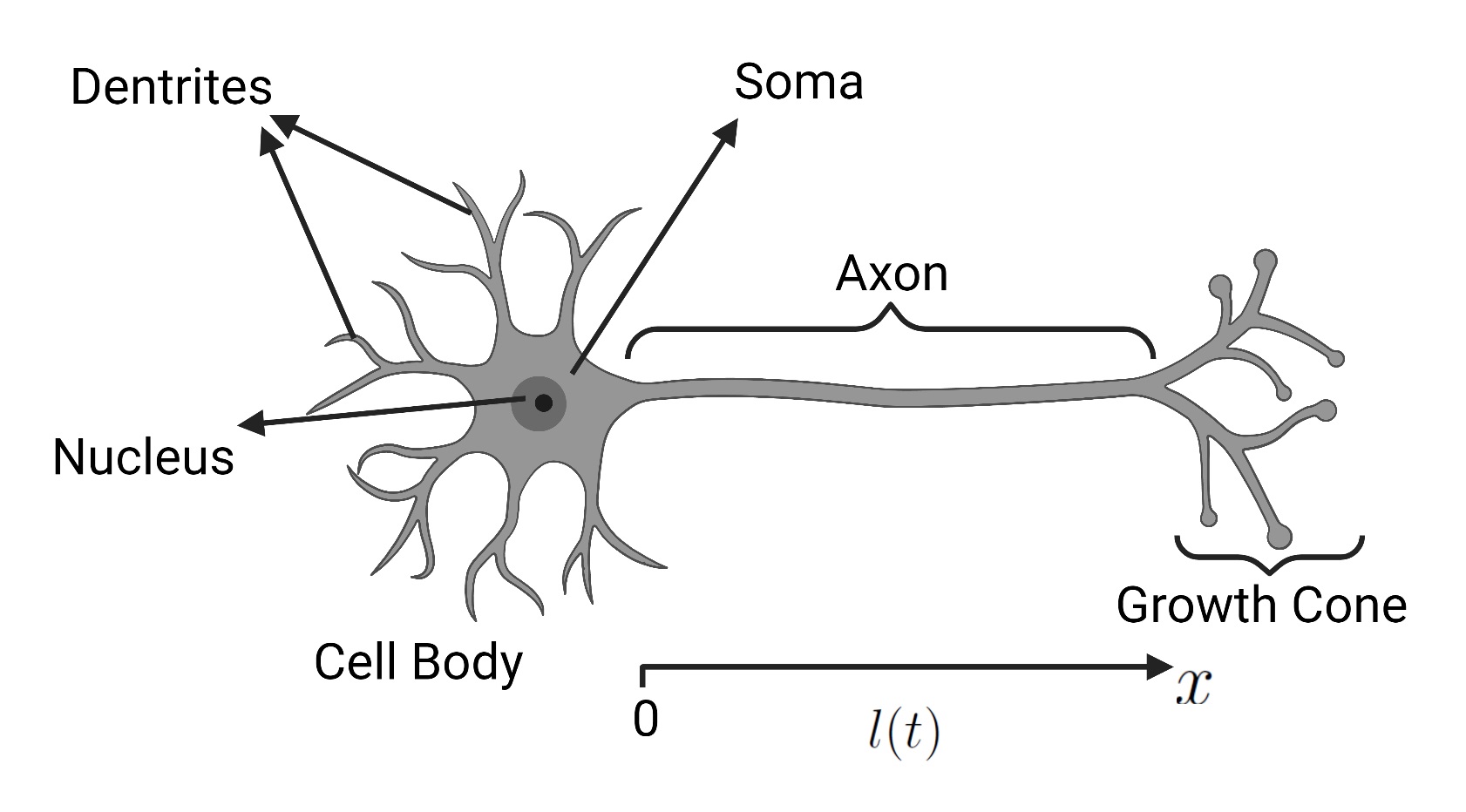}
     \caption{Neuron Structure and PDE domain}
     \label{fig:my_label}
 \end{figure}


\section{Modeling of Axon Growth} \label{sec:model} 
In this section, we present a mathematical model of axon growth governed by a moving boundary PDE, derive a steady-state solution for a given set point of the axon length, and provide a reference error system to be stabilized. 
\subsection{Axon growth model by a moving boundary PDE} 
Tubulin is a group of proteins which is responsible for the growth of a newly created axon. Two assumptions can be described to model this responsibility. First, tubulin proteins are modeled as a homogeneous continuum because free tubulin molecules are very small. Then, tubulin molecules are assumed to be the only factor responsible for axonal growth.
With these assumptions, as proposed in \cite{diehl2014one, diehl2016efficient}, the axonal growth of a newborn axon by tubulin can be modelled as

\begin{align}
\label{sys1} 
c_t (x,t) =& D c_{xx} (x,t) - a c_x (x,t) - g c(x,t) , \\
\label{sys2} c_x(0,t) = & - q_{\rm s}(t), \\
\label{sys3} c(l(t),t) =& c_{\rm c} (t), \\
\label{sys4} l_{\rm c} \dot{c}_{\rm c}(t) = & (a-gl_{\rm c}) c_{\rm c}(t) - D c_x (l(t), t) \notag\\
& - (r_{\rm g} c_{\rm c}(t) + \tilde{r}_{\rm g} l_{\rm c} )(c_{\rm c}(t) - c_{\infty}), \\
\label{sys5} \dot{l}(t) =& r_{\rm g} (c_c(t)-c_{\infty}),
\end{align}
where the tubulin concentration in the axon is denoted as $c(x,t)$, and the variables with subscripts $t$ and $x$ denote the partial derivatives with respect to the subscripts. The variable $q$ denotes the concentration flux. Subscript ``s" is used for the soma of the neuron, and subscript ``c" is used for the cone of the neuron. 
Namely, $q_{\rm s}(t)$ is the concentration flux of tubulin in the soma, and $c_{\rm c}(t)$ is the concentration of tubulin in the cone. The length of the axon in $x$-coordinate is $l(t)$. As time passes, tubulin in the neuron degrade at the constant rate, $g$. $D$ is the diffusivity constant, and $a$ is the velocity constant of tubulin proteins in \eqref{sys1}. $l_{\rm c}$ is the growth ratio of the cone, and $\tilde{r}_{\rm g}$ is the reaction rate to create microtubules. $\tilde{s}_{\rm g}$ is the disassemble rate which means it is the transformation rate from microbules to tubulin dimers and $r_{\rm g}$ is a lumped parameter. The equilibrium of  the tubulin concentration in the cone, which is denoted as $c_{\infty}$, causes the axonal growth to stop. The control problem to be solved in this paper is presented in the following statement. 

\textbf{Problem:} Develop a feedback control law of $q_{\rm s}(t)$ so that $l(t)$ converges to $l_{\rm s}$ for a given desired length of the axon $l_{\rm s}>0$, subject to the governing equations \eqref{sys1}--\eqref{sys5}. 

\subsection{Steady-state solution} 
To tackle the problem stated above, we first solve a steady-state solution of the concentration profile for a given axon length $l_{\rm s}$. By setting the time derivatives in \eqref{sys1}, \eqref{sys4}, and \eqref{sys5} to zero, one can derive the steady-state solution of \eqref{sys1}-\eqref{sys5} as
\begin{align}
 \label{ceq} 
c_{\rm eq}(x) = c_{\infty} \left( K_{+} e^{\lambda_+ (x - l_{\rm s})} + K_- e^{\lambda_{-} (x - l_{\rm s}) } \right),
\end{align}
where

\begin{small}
\begin{align}
    \lambda_+ =& \frac{a + \sqrt{a^2 + 4 D g}}{2 D}, \quad \lambda_- = \frac{a - \sqrt{a^2 + 4 D g}}{2 D}, \\
K_+ = & \frac{1}{2} +  \frac{a  - 2 g l_{\rm c} }{2 \sqrt{a^2 + 4 D g}},  K_- =  \frac{1}{2} -  \frac{a  - 2 g l_{\rm c} }{2 \sqrt{a^2 + 4 D g}}.
\end{align}
\end{small}

The steady-state input for the concentration flux in the soma is obtained as
\begin{align}
    q_{\rm s}^* = - c_{\infty} \left( K_{+} \lambda_+ e^{ - \lambda_+ l_{\rm s}} + K_- \lambda_- e^{ - \lambda_{-} l_{\rm s} } \right).
\end{align}

\subsection{Reference error system} 
Let $u(x,t)$, $z_1(t)$, and $z_2(t)$ be the reference error states, and $U(t)$ be the reference error input, defined as
\begin{align}
u(x,t) =& c(x,t) - c_{\rm eq}(x), \\
z_{1}(t) =& c_{\rm c}(t) - c_{\infty}, \\
z_2(t) =& l(t) - l_{\rm s}, \\
U(t) = & - ( q_{\rm s}(t) - q_{\rm s}^*). 
\end{align}
By substituting the steady-state solution in \eqref{ceq} from the governing equations \eqref{sys1}-\eqref{sys5}, the reference error system is obtained as in \cite{diehl2014one, krstic2020materials}. 

Let $X \in \R^2$ be an ODE state vector for the reference error states $z_1(t)$ and $z_2(t)$, defined by  
\begin{align} \label{xdef}
    X(t)=[ z_1(t) \quad z_2(t)]^\top . 
\end{align}
Applying the linearization of $X(t)$ around zero states leads to the following linearized reference error system (see Section 12-2 in \cite{krstic2020materials} for the detailed derivation): 
\begin{align}
    \label{ulin-PDE}
u_t (x,t) =& D u_{xx} (x,t) - a u_x (x,t) - g u(x,t) , \\
u_x(0,t) = & U(t), \label{ulin-BC1} \\
\label{linreferr3}u(l(t),t) =&C^\top X(t)  , \\
\dot{X}(t) = & A X(t) + B u_x (l(t), t), \label{ulin-ODE}
\end{align}
where 
 \begin{align} \label{AB-def} 
 A = &\left[ 
 \begin{array}{cc}
 \tilde a & 0 \\
 r_{\rm g} & 0
 \end{array}  
 \right] , \quad B =  \left[ 
 \begin{array}{c}
 - \beta \\
 0
 \end{array}  
 \right], \\
 C = &\left[1 \quad - \frac{(a-gl_{\rm c}) c_{\infty}}{D}\right]^\top  .  \label{C-def}
\end{align}

\section{Control Design} \label{sec:control} 

The control design in this paper is based on a backstepping transformation \cite{krstic2008boundary}, which maps the reference error system $(u,X)$ to a target system $(w,X)$. The backstepping transformation and the target system in this paper are given in the remainder of this section.
\subsection{Backstepping transformation and target system}
Following the procedure in \cite{koga2018control} for the Stefan problem, we consider the following backstepping transformation:

\begin{small}
\begin{align}
 \label{bkst}
w(x,t) = & u(x,t) - \int_x^{l(t)} k(x,y) u(y,t) dy 
- \phi(x - l(t))^\top X(t),  
\end{align} 
\end{small}where $k(x,y) \in \R $ and $\phi(x-l(t)) \in \R^2$ are the gain kernel functions to be determined. We suppose the desired target system as  
 \begin{align} 
\label{tar-PDE} w_t (x,t) =& D  w_{xx} (x,t) - a w_x (x,t) - g w(x,t) \notag\\
&- \dot l(t) F(x,X(t))  , \\
\label{tar-BC1} w_x(0,t) = & \gamma w(0,t),\\
\label{tar-BC2}w(l(t),t) =&0 , \\
\label{tar-ODE} \dot{X}(t) = & (A + BK^\top) X(t) + B w_x (l(t), t),
 \end{align}
where $K \in \R^2$ is a feedback control gain vector chosen to make $A +BK$ Hurwitz which means that $(A,B)$ is controllable. In detail, by setting 
 \begin{align} \label{K-def} 
 K = [ k_1 \quad k_2],  \quad k_1 > \frac{\tilde a}{\beta} , \quad k_2 > 0 ,
 \end{align} 
 one can observe that the conditions for $(k_1, k_2)$ makes $A + BK$ Hurwitz.
 Also, the redundant nonlinear term $F(x,X(t)) \in \R$ in \eqref{tar-PDE} caused by  the moving boundary is described as
 \begin{align} \label{F-def} 
     F(x,X(t))= \left(\phi'(x-l(t))^T-k(x, l(t)) C^T \right) X(t) .
 \end{align}
\subsection{Gain kernel solution}

 Taking the time and spatial derivatives of \eqref{bkst} together with the solution of \eqref{ulin-PDE}-\eqref{ulin-ODE}, and substituting $x = l(t)$ in both the transformation \eqref{bkst} and its spatial derivative, and by matching
with the target system \eqref{tar-PDE}--\eqref{tar-ODE}, we have the following PDE and an ODE for gain kernels.

\begin{align}
&k_{xx}(x,y)-k_{yy}(x,y)=\frac{a}{D}\left(k_x(x,y)+k_y(x,y)\right),\label{kernel1}\\
&k_x(x,x)+k_y(x,x)=0,\label{kernel2}\\
&k(x, l(t))=-\frac{1}{D}\phi(x-l(t))^\top B,\label{kernel3}\\
&D\phi''(x-l(t))^\top-a\phi'(x-l(t))^\top-\phi(x-l(t))^T\left[gI+A\right]\notag\\
&\ \ \ \ \ \ \ \ \ \ \  -Dk_y(x,l(t))C^\top+ak(x,l(t))C^\top=0, \label{phiODE1}\\
&\phi(0)=C,\quad \phi'(0)=k(l(t),l(t))C^\top+K^\top. \label{phiODE3}
\end{align}
By the conditions \eqref{kernel1}--\eqref{kernel3}, the solution of $k(x,y)$ is uniquely given by 
\begin{align}
    k(x,y)= -\frac{1}{D}\phi(x-y)^\top B.
    \label{kstar}
\end{align}
Substituting \eqref{kstar} into \eqref{phiODE1}--\eqref{phiODE3}, the ODE of $\phi(\cdot)$ becomes
\begin{small}
\begin{align}
    &D\phi''(x-l(t))^\top-\phi'(x-l(t))^\top \left(BC^\top+aI\right)\notag\\&\ \ \ \ \ \ \ \ \ \ \ \ \ \ \ \ -\phi(x-l(t))^\top\left[gI+A+\frac{a}{D}BC^\top\right]=0,\label{phiup1}\\
    &\phi(0)=C, \quad \phi'(0)^\top=-\frac{1}{D}C^\top BC^\top+K^\top. \label{phiup3}
\end{align}
\end{small}
The solution to \eqref{phiup1}-\eqref{phiup3} is given by (see \cite{tang2011state})
\begin{align}
    \phi(x)^\top=\begin{bmatrix}C^\top & K^\top-\frac{1}{D}C^\top BC^\top\end{bmatrix}e^{N_1x}\begin{bmatrix} I \\ 0
\end{bmatrix},
\label{phix}
\end{align}
where 
the matrix $N_1 \in \R^{4 \times 4}$ is defined as 
\begin{align}
    N_1=\begin{bmatrix}0 & \frac{1}{D}\left(gI+A+\frac{a}{D}BC^\top\right)\\ I &\frac{1}{D}\left(BC^\top+aI\right)\end{bmatrix}.
\end{align}

In order to check invertibility of \eqref{bkst}, we define the inverse Volterra Transformation
\begin{small}
\begin{align}
    u(x,t)=&w(x,t)+\int_{x}^{l(t)}q(x,y)w(y,t)dy+\varphi(x-l(t))^\top X(t) . 
\end{align}
\end{small}
Then, we apply the same strategy to obtain the inverse gain kernel functions, $q(x,y)\in \mathbb{R}$, and $\varphi(x-l(t))\in \mathbb{R}^2$. The PDE and ODE for the inverse gain kernels are
\begin{align}
    &q_{xx}(x,y)-q_{yy}(x,y)=\frac{a}{D}\left(q_x(x,y)+q_y(x,y)\right)\label{eqn:inv-ker-1}\\
    &q_x(x,x)+q_y(y,y)=0\\
    &q(x,l(t))=-\frac{1}{D}\varphi(x-l(t))^\top B \\
    &D\varphi^{''}(x-l(t))^\top+a\varphi^{'}(x-l(t))^\top\nonumber \\
    & \ \ \ \ \ \ \ \ \ \ \ \ \ \ \ +\left(gI+A+BK^\top\right)\varphi(x-l(t))^\top=0 \\
    &\varphi(0)=C, \quad \varphi'(0)=K
    \label{eqn:inv-ker-6}
\end{align}
which is well-posed, so one can obtain the solution of \eqref{eqn:inv-ker-1}-\eqref{eqn:inv-ker-6} by applying the same procedure as the one we applied to the forward kernel equations.
\subsection{Control law} 
The control design is derived from the boundary condition \eqref{tar-BC1} of the target system at $x = 0$. 
Substituting $x = 0$ into the transformation \eqref{bkst} and its spatial derivative, and using boundary conditions for both the target system and the error system, the control input is described as follows

 \begin{align}
  U(t)= & \left(\gamma-\frac{\beta}{D}\right)u(0,t) -\frac{1}{D}\int_0^{l(t)}p(x)Bu(x,t)dx\notag\\
   &+p(l(t))X(t), 
\label{real-input}
\end{align}
where 
    $p(x) = \phi'(-x)^\top-\gamma\phi(-x)^\top$.
Plugging the system matrices $(A,B,C,K)$ into the gain kernel function \eqref{phix}, and calculating the transition matrix, one can explicitly derive the function $p(x) \in \R^2$. 

\section{Proof of Main Result} \label{sec:stability} 

This section presents the main result of the paper, which is a local stability of the closed-loop system, with providing its proof by considering the $H_1$-norm 
\begin{align}
    Z(t) = || u(\cdot, t) ||^2 + || u_x(\cdot, t) ||^2 + X^\top X. 
\end{align}
We present our main theorem below. 


\begin{theorem}
Consider the closed-loop system consisting of the plant \eqref{ulin-PDE}--\eqref{ulin-ODE} with the control law \eqref{real-input} and \eqref{phix}. Suppose the control parameter $\gamma >0$ is chosen to satisfy  $\gamma \geq \frac{a}{D}$. Then, there exist positive parameters $ \bar M>0$, $c>0$, and $\kappa>0$, such that if $Z(0)< \bar M$ then the following norm estimate holds
\begin{align}
    Z(t)\leq c Z(0) \exp( - \kappa t) 
\end{align} 
for all $ t\geq 0$, which guarantees the local exponential stability of the closed-loop system. 
\end{theorem}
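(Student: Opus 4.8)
The plan is to run the Lyapunov analysis entirely in the target variables $(w,X)$ of \eqref{tar-PDE}--\eqref{tar-ODE}, in which the sole nonlinearities are the quadratic terms generated by the moving boundary — the forcing $\dot l(t)F(x,X(t))$ and the state dependence of the domain $[0,l(t)]$ through $l(t)=l_{\rm s}+z_2(t)$ — and then transfer the estimate back to $(u,X)$. First I would establish a norm equivalence: since the kernels $k,\phi$ and the inverse kernels $q,\varphi$ are bounded together with their first derivatives, and $l(t)$ stays in a compact subinterval of $(0,\infty)$ as long as $|z_2(t)|$ is small, the transformation \eqref{bkst} and its inverse are bounded on $H_1(0,l(t))$ uniformly in $t$; hence $\underline C\,\Omega(t)\le Z(t)\le\overline C\,\Omega(t)$ where $\Omega(t)=\|w(\cdot,t)\|^2+\|w_x(\cdot,t)\|^2+X^\top X$ and $\|\cdot\|$ denotes the $L^2(0,l(t))$-norm. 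It then suffices to prove exponential decay of $\Omega$ on a small sublevel set.

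The candidate Lyapunov functional is
\begin{align}
V(t)=\alpha_1\|w\|^2+\alpha_2\big(\|w_x\|^2+\gamma\,w(0,t)^2\big)+\alpha_3\,X^\top P X,\notag
\end{align}
where $P=P^\top>0$ solves $P(A+BK^\top)+(A+BK^\top)^\top P=-Q$ for a fixed $Q=Q^\top>0$ (available since \eqref{K-def} makes $A+BK^\top$ Hurwitz) and $\alpha_1,\alpha_2,\alpha_3>0$ are fixed hierarchically below; $V$ is equivalent to $\Omega$, the extra boundary term being controlled by a trace inequality. Differentiating along \eqref{tar-PDE}--\eqref{tar-ODE}: for $\|w\|^2$, one integration by parts with $w(l(t),t)=0$ and $w_x(0,t)=\gamma w(0,t)$ gives $-2D\|w_x\|^2-2g\|w\|^2-(2D\gamma-a)w(0,t)^2$ plus a cubic remainder; for $\|w_x\|^2$, writing $2\!\int w_x w_{xt}=2[w_x w_t]_0^{l(t)}-2\!\int w_{xx}w_t$, using $w_t(l(t),t)=-\dot l(t)w_x(l(t),t)$ (from differentiating $w(l(t),t)=0$) and substituting $w_t$ from \eqref{tar-PDE}, gives $-2D\|w_{xx}\|^2-2g\|w_x\|^2-(a\gamma^2+2g\gamma)w(0,t)^2+(a-\dot l)w_x(l(t),t)^2-2\gamma w(0,t)w_t(0,t)$ plus a remainder which, after a Young split that throws a harmless $\epsilon_0\|w_{xx}\|^2$ into the dissipation, is cubic; and for $X^\top PX$, $-X^\top QX+2X^\top PB\,w_x(l(t),t)$. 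The term $-2\alpha_2\gamma w(0,t)w_t(0,t)$ is cancelled exactly by $\frac{d}{dt}\big[\alpha_2\gamma w(0,t)^2\big]$, and the hypothesis $\gamma\ge a/D$ makes the net $w(0,t)^2$ coefficient strictly negative.

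The crux is the sign-indefinite trace $(a-\dot l)w_x(l(t),t)^2$, which is positive to leading order and is the price paid for the boundary flux $B\,w_x(l(t),t)$ feeding the ODE. I would dominate it using the second-order trace estimate $w_x(l(t),t)^2=\gamma^2 w(0,t)^2+2\!\int_0^{l(t)}w_x w_{xx}\,dx\le\gamma^2 w(0,t)^2+\eta\|w_{xx}\|^2+\eta^{-1}\|w_x\|^2$ together with Young's inequality on the ODE cross term $2\alpha_3 X^\top PB\,w_x(l)\le\alpha_3\delta\|X\|^2+\alpha_3\delta^{-1}\|PB\|^2 w_x(l)^2$; then, fixing $\alpha_3$ small, $\alpha_2$ next, $\alpha_1$ large, with $\delta<\lambda_{\min}(Q)$ and $\eta$ small enough that the $-2D\alpha_2\|w_{xx}\|^2$ dissipation still dominates, all the coefficients of $\|w\|^2,\|w_x\|^2,\|w_{xx}\|^2,w(0,t)^2,w_x(l(t),t)^2,\|X\|^2$ become strictly negative, so $\dot V\le-c_1 V+R$. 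Each term of $R$ carries a factor $\dot l=r_{\rm g}z_1$, $F$ or $F_x$ — all linear in $X$, hence $O(\sqrt V)$ — so $R=O(V^{3/2})$, and smallness of $V$ also keeps $|z_2(t)|$, hence $l(t)$, inside the compact interval (and bounded away from $0$) used for the norm equivalence. Therefore there is $\bar M_0>0$ with $V<\bar M_0\Rightarrow\dot V\le-\tfrac{c_1}{2}V$; the standard invariance argument then gives $V(t)\le V(0)e^{-\kappa t}$ with $\kappa=c_1/2$ for all $t\ge0$ whenever $V(0)<\bar M_0$, and the norm equivalence converts this into $Z(t)\le c\,Z(0)e^{-\kappa t}$ for $Z(0)<\bar M$, with $\bar M$ a fixed fraction of $\bar M_0$. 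The main obstacle is precisely this $w_x(l(t),t)^2$ boundary term: handling it is what forces the hierarchical weights, the full $H_1$ (rather than $L^2$) energy, and the restriction to a neighborhood of the equilibrium. A routine secondary point is that the formal computations of $\dot V$ are justified a posteriori by the well-posedness of the closed-loop system together with a density argument.
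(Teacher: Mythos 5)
Your proposal is correct and follows essentially the same route as the paper: an $H_1$ Lyapunov functional built from $\|w\|^2$, $\|w_x\|^2$, the boundary term $\gamma\,w(0,t)^2$, and $X^\top P X$ with hierarchically chosen weights, the trace/Agmon estimate to absorb $w_x(l(t),t)^2$ into the $\|w_{xx}\|^2$ dissipation, a cubic $O(V^{3/2})$ bound on the moving-boundary remainders, an invariant-sublevel-set argument for local decay, and transfer back to $Z$ via the invertible backstepping transformation. The only differences are cosmetic (e.g., you integrate $a\int w_{xx}w_x$ exactly rather than splitting it by Young's inequality as the paper does).
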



\subsection{Basic idea of proof of Theorem 1}
Since the backstepping transformation \eqref{bkst} is invertible, the stability property of the target system \eqref{tar-PDE}-\eqref{tar-ODE} is equivalent to the stability property of the closed-loop system consisting of the plant \eqref{ulin-PDE}-\eqref{ulin-ODE} with the control law \eqref{real-input}. 
The local stability of the target system is studied under the assumption that the following two conditions hold:
\begin{align} \label{ineq-l} 
     0 < l(t) \leq \bar l, \\
    |\dot l(t) | \leq \bar v, \label{ineq-ldot}
\end{align}
for some $\bar l>l_{\rm s} >0$ and $\bar v>0$. The restricted initial state will be given later in order to satisfy these conditions for all $t \geq 0$. 
\subsection{Useful inequalities}
The following inequalities are used in Lyapunov analysis. Under the condition \eqref{ineq-l}, and by boundary conditions \eqref{tar-BC1} and \eqref{tar-BC2}, Poincare's inequality is provided as
\begin{align} \label{poincare-1} 
    || w ||^2 \leq &  4 \bar{l}^2 ||w_{x}||^2, \quad
    || w_{x} ||^2 \leq & 2 \bar l \gamma^2 w(0,t)^2 + 4 \bar{l}^2 ||w_{xx} ||^2, 
\end{align}
and the Agmon's inequality is given as 
\begin{align}
  \label{Agmon} 
    w_x(l(t),t)^2 \leq & 2 \gamma^2 w(0,t)^2 + 4 \bar l ||w_{xx}||^2 . 
\end{align}




\subsection{Proof of Lyapunov stability}
We consider the Lyapunov function of the target system as
\begin{align}
    V = d_1 V_1 + V_2 + \fr{\gamma}{2} w(0,t)^2 + d_2 V_3, 
    \label{Vtotal}
\end{align}
where $d_1>0$ and $d_2 >0$, and each Lyapunov function is
\begin{align}
    V_1 =& \fr{1}{2} ||w||^2 := \fr{1}{2} \int_0^{l(t)} w(x,t)^2 dx, \\
\label{V2-def}    V_2 =& \fr{1}{2} ||w_x||^2 := \fr{1}{2} \int_0^{l(t)} w_x(x,t)^2 dx, \\
V_3=&X(t)^\top PX(t),
\end{align}
where positive definite matrix $P$ is the solution of the following Lyapunov equation
\begin{align}
    (A + BK^\top )^\top P + P (A + BK^\top ) = - Q,
\end{align}
for some positive definite matrix $Q$. Since $(A+BK^\top)$ is Hurwitz, positive definite matrices $P$ and $Q$ exist. Due to the positive definiteness, it holds that
\begin{align} \label{ineq-XPX} 
    \lam_{\rm min}(P) X^\top X \leq X^\top P X \leq \lam_{\rm max}(P) X^\top X, 
\end{align}
where $\lam_{\rm min}(P) >0$ and $\lam_{\rm max}(P)>0$ are the smallest and the largest eigenvalues of $P>0$. Then we have the following lemma. 

\begin{lemma} Assume that \eqref{ineq-l}--\eqref{ineq-ldot} are satisfied with \begin{align}
    \bar{v}=\min\left\{\frac{g }{4\gamma}, \frac{D}{8\bar{l}}\right\}, 
\end{align}
for all $ t \geq 0$. Then, for sufficiently large $d_1>0$ and small $d_2>0$, there exists a positive constant $\beta>0$ such the following norm estimate holds for all $t \geq 0$:
\begin{align}
    \dot{V}\leq -\alpha V+\beta V^{3/2}, 
    \label{vdotbound}
\end{align}
where $\alpha=\min\left\{2g+\frac{D}{4\bar{l}},\frac{4g+d_1D}{2},\frac{\lambda_{\rm min}(Q)}{2\lambda_{\rm max}(P)}, \frac{d_2\left(2d_1D+g\right)}{4}\right\}$. 
\end{lemma}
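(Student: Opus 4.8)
The plan is to differentiate $V$ from \eqref{Vtotal} along the target system \eqref{tar-PDE}--\eqref{tar-ODE} and reorganize. Write $\dot V = d_1\dot V_1 + \dot V_2 + \gamma w(0,t)w_t(0,t) + d_2\dot V_3$; because $V_1,V_2$ are integrals over the moving domain $[0,l(t)]$, Leibniz' rule adds boundary terms $\tfrac12\dot l(t)w(l(t),t)^2$ and $\tfrac12\dot l(t)w_x(l(t),t)^2$, the first of which vanishes by \eqref{tar-BC2}. Substituting \eqref{tar-PDE} for $w_t$ (and its $x$-derivative for $w_{xt}$) in $\int_0^{l}ww_t\,dx$ and $\int_0^{l}w_xw_{xt}\,dx$, integrating by parts, and using $w(l(t),t)=0$ together with $w_x(0,t)=\gamma w(0,t)$, one obtains: from $d_1\dot V_1$, the quadratics $-d_1(D\gamma-\tfrac a2)w(0,t)^2 - d_1D\|w_x\|^2 - d_1g\|w\|^2$ and the moving-boundary remainder $-d_1\dot l\int_0^{l}wF\,dx$; from $\dot V_2$, the quadratics $-D\|w_{xx}\|^2 - g\|w_x\|^2 - (\tfrac a2\gamma^2+g\gamma)w(0,t)^2 + \tfrac a2 w_x(l(t),t)^2 - \tfrac12\dot l w_x(l(t),t)^2$, a cross-term $-\gamma w(0,t)w_t(0,t)$, and remainders containing $F$; and, by the Lyapunov equation, $\dot V_3 = -X^\top QX + 2X^\top PBw_x(l(t),t)$. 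The cross-term $-\gamma w(0,t)w_t(0,t)$ is cancelled exactly by the contribution $\gamma w(0,t)w_t(0,t)$ of the summand $\tfrac\gamma2 w(0,t)^2$ in $V$ — this is the role of that summand — and $\gamma\ge a/D$ makes $D\gamma-\tfrac a2>0$.

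Next I would dispose of the tip term $w_x(l(t),t)^2$, which couples the PDE boundary, the moving front, and the ODE. Agmon's inequality \eqref{Agmon} gives $\tfrac12|\dot l|w_x(l(t),t)^2\le \bar v\gamma^2w(0,t)^2 + 2\bar v\bar l\|w_{xx}\|^2$, and the choice $\bar v=\min\{\tfrac{g}{4\gamma},\tfrac{D}{8\bar l}\}$ makes this $\le\tfrac{g\gamma}{4}w(0,t)^2+\tfrac D4\|w_{xx}\|^2$, absorbed by the existing negative $w(0,t)^2$ and $-D\|w_{xx}\|^2$ terms — that is precisely why $\bar v$ is taken this small. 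The ODE cross-term $2X^\top PBw_x(l(t),t)$ is split by Young's inequality, keeping half of $-X^\top QX$ and leaving a multiple of $w_x(l(t),t)^2$; Agmon turns this into $w(0,t)^2$ and $\|w_{xx}\|^2$ contributions which, carrying the factor $d_2$, are dominated once $d_2$ is small (this is one reason $d_2$ must be small, since the $-D\|w_{xx}\|^2$ budget is $d_2$-independent). Similarly, $\tfrac a2 w_x(l(t),t)^2$ from $\dot V_2$ is handled by Agmon, its $w(0,t)^2$ part absorbed by the $d_1$-weighted term and its $\|w_{xx}\|^2$ part by $-D\|w_{xx}\|^2$.

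Finally, the remainders containing $F$ are the only genuine nonlinearities: since $\dot l(t)=r_{\rm g}z_1(t)$ is itself a state component and $F(\cdot,X)$ is linear in $X$ by \eqref{F-def}, terms such as $\dot l\int_0^{l}wF\,dx$, $\dot l w_x(l(t),t)F(l(t),X)$ and $\dot l\int_0^{l}w_xF_x\,dx$ are at least cubic in $(w,w_x,X)$. Keeping $|\dot l|=r_{\rm g}|z_1|\le r_{\rm g}|X|$ (rather than the crude $|\dot l|\le\bar v$), bounding $F,F_x$ on $[0,\bar l]$ by constants, and using Cauchy--Schwarz, Young (spending a small fraction of $-D\|w_{xx}\|^2$ against the $w_{xx}$-bearing pieces), Poincare \eqref{poincare-1}, Agmon, \eqref{ineq-XPX}, the elementary bound $A^{3/2}+B^{3/2}\le(A+B)^{3/2}$, and the a priori bound on $|X|$ that follows from \eqref{ineq-l}--\eqref{ineq-ldot} (which turns any quartic $|X|^4$ into a cubic $\bar X|X|^3$), one bounds their total by $\beta V^{3/2}$ for some $\beta$ depending on $d_1,d_2,\bar l$ and the plant constants. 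It then remains to make the leftover quadratic form $-\alpha V$: first choose $d_1$ large so that $-d_1(D\gamma-\tfrac a2)w(0,t)^2$, $-d_1D\|w_x\|^2$, $-d_1g\|w\|^2$ dominate all $O(1)$ positive $w(0,t)^2$ contributions; then choose $d_2$ small so that the $d_2$-weighted positive $\|w_{xx}\|^2$ contributions remain below what is left of $-D\|w_{xx}\|^2$; converting the surviving $-\|w_{xx}\|^2$, $-\|w_x\|^2$ margins into multiples of $V_2$, $V_1$ by Poincare and using \eqref{ineq-XPX} for $V_3$, the quadratic part is bounded by $-\alpha V$ with $\alpha$ the stated minimum. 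The main obstacle is exactly this final bookkeeping — confirming the constants can be fixed in the order ``$\bar v$, then $d_1$ large, then $d_2$ small'' so that every positive term is strictly dominated, that the single $-D\|w_{xx}\|^2$ budget is not overspent across the several Agmon estimates for $w_x(l(t),t)^2$, and that the comparison with $V$ reproduces precisely the displayed $\alpha$.
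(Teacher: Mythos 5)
Your proposal is correct and follows essentially the same route as the paper's proof: the same Lyapunov function with the $\tfrac{\gamma}{2}w(0,t)^2$ term cancelling $-\gamma w(0,t)w_t(0,t)$, the same use of Agmon/Young/Poincar\'e with $\bar v$ sized to absorb the $\tfrac12|\dot l|w_x(l(t),t)^2$ term, $d_1$ large and $d_2$ small chosen in that order, and the $F$-remainders bounded as cubic terms via $\dot l = r_{\rm g}z_1$ and the linearity of $F$ in $X$, yielding $\beta V^{3/2}$. The only cosmetic difference is that you evaluate $a\int_0^{l}w_{xx}w_x\,dx$ exactly as boundary terms where the paper applies Young's inequality (hence its condition $d_1\ge 2a^2/D^2$), which does not change the argument.
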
 

\begin{proof} 
Taking the time derivative of the Lyapunov functions along the target system \eqref{tar-PDE}--\eqref{tar-ODE}, we have
\begin{align}
\dot V_1 =&  - D || w_{x}||^2  - g ||w||^2 - \left(\gamma D - \frac{a}{2} \right) w(0,t)^2 \notag\\
&+ \dot l(t) \int_0^{l(t)} F(x,X(t)) w(x,t) dx. \label{V1dot-2}\\
    \dot{V}_2 =& - D || w_{xx}||^2 + a  \int_0^{l(t)} w_{xx}(x,t) w_{x}(x,t) dx  \notag\\&- g || w_{x}||^2 
    - \gamma w(0,t) w_{t}(0,t) -\frac{1}{2}\dot{l}(t)w_x(l(t),t)^2\notag\\
& - \dot{l}(t) (F(l(t),X(t)) w_x(l(t),t)- \gamma F(0,X(t)) w(0,t)) \notag\\
- &\gamma g w(0,t)^2-  \dot{l}(t)\int_0^{l(t)} F_x(x,X(t)) w_{x}(x,t)  dx  \label{A-3-2:V2dot2-1} \\
\dot{V}_3=&-X(t)^\top QX(t)+ 2 w_x(l(t), t)B^\top PX(t) . \label{V3dot} 
\end{align}
Applying the Agmon's inequality \eqref{Agmon} and Young's inequality to $\dot{V}_2$ in \eqref{A-3-2:V2dot2-1} and $\dot{V}_3$ in \eqref{V3dot} leads to 
\begin{align}
\dot{V}_2 \leq & - \fr{D}{4} || w_{xx}||^2 - \fr{\gamma g}{2} w(0,t)^2 - \left(g - \frac{a^2}{D} \right)  || w_{x}||^2  \notag\\
& + \dot{l}(t) \gamma F(0,X(t)) w(0,t)\notag\\&+\dot{l}(t) \int_0^{l(t)} F_x(x,X(t)) w_{x}(x,t)  dx \notag\\
&+\frac{\big|\dot{l}(t)\big|}{2}F(l(t),X(t))^2- \gamma w(0,t) w_{t}(0,t), \label{A-3-2:V2dot2-4}\\
    \dot{V}_3  \leq& -\frac{\lambda_{\rm min}(Q)}{2}X^\top X \notag\\
    & + \frac{2\big|B^\top P \big|^2}{\lambda_{\rm min}(Q)}\left(2 \gamma^2 w(0,t)^2 + 4 \bar l ||w_{xx}||^2\right).
    \label{dotV3last}
\end{align}
By using \eqref{V1dot-2}, \eqref{A-3-2:V2dot2-4} and \eqref{dotV3last}, recalling $\gamma \geq \frac{a}{D}$, and choosing the constants $d_1$ and $d_2$ to satisfy
\begin{align}
    d_1\geq \frac{2a^2}{D^2}, \
    d_2\leq \min\left\{\frac{D\lambda_{\rm min}(Q)}{64\bar{l}\big|B^TP\big|^2},\frac{g D\lambda_{\rm min}(Q)}{16a\left|B^TP\right|^2}\right\},  
\end{align}
the time derivative of Lyapunov function \eqref{Vtotal} for the target system is shown to satisfy the following inequality 

\begin{small}
\begin{align}
    \dot V &     
\leq  -\alpha V +\frac{\big|\dot{l}(t)\big|}{2}F(l(t),X(t))^2\notag+\dot{l}(t) \gamma F(0,X(t)) w(0,t)\notag\\
& + \dot{l}(t) \int_0^{l(t)} F_x(x,X(t)) w_{x}(x,t) + d_1  F(x,X(t)) w(x,t) dx. 
    \label{dotVwalpha}
\end{align}
\end{small}



Taking the square of \eqref{F-def}, it follows that the redundant nonlinear terms in \eqref{dotVwalpha} can be bounded by a quadratic norm of the ODE state. Namely, there exist positive constants $L_1>0$, $L_2>0$, and $L_3$ such that $F(0,X(t))^2 \leq L_1 X^\top X$, $F(l(t), X(t))^2 \leq L_1 X^\top X$, $ \int_0^{l(t)} F_x(x,X(t))^2 dx \leq L_2 X^\top X$, $\int_0^{l(t)} F(x,X(t))^2 dx \leq L_3 X^\top X$. In addition, 
    $\dot{l}(t)$ can be rewritten as $\dot{l}(t)=r_{\rm g} e_1 X$. 
By using these relations and applying Cauchy-Schwarz inequality and Young's inequality, one can show that 


\begin{small}
\begin{align}
    \dot V &\leq  -\alpha V +r_{\rm g}\left( \frac{L_1(1+\gamma)+1}{2d_2\lambda_{\rm min}(P)}+(1+L_3+d_1L_2)\right)V^{3/2}. 
\end{align}
\end{small}
Thus, there exists $\beta>0$ such that Lemma 1 holds.
\end{proof}
\subsection{Guaranteeing the conditions for all time}

Next, we prove important lemmas to conclude with Theorem 1 ensuring the local stability of the closed-loop system.
\begin{lemma}
    There exists a positive constant $M_1>0$ such that in the region $\Omega_1 := \{(w,X) \in H_1 \times \R^2 | V(t) < M_1\}$ the conditions \eqref{ineq-l} and \eqref{ineq-ldot} are satisfied. 
\end{lemma}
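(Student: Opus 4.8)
The plan is to translate the claim about the state variables $(w, X)$ into statements that can be controlled by the single Lyapunov function $V$ defined in \eqref{Vtotal}. Recall that we must produce $M_1 > 0$ so that $V(t) < M_1$ forces $0 < l(t) \le \bar l$ and $|\dot l(t)| \le \bar v$. The second condition is the easier one: since $\dot l(t) = r_{\rm g} e_1^\top X(t)$, we have $|\dot l(t)| \le r_{\rm g} \|X(t)\| \le r_{\rm g} (V_3 / (d_2 \lambda_{\min}(P)))^{1/2} \le r_{\rm g} (V / (d_2 \lambda_{\min}(P)))^{1/2}$, using \eqref{ineq-XPX} and the fact that $d_2 V_3$ is one of the nonnegative summands of $V$. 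Hence requiring $V(t) < d_2 \lambda_{\min}(P) \bar v^2 / r_{\rm g}^2$ already guarantees \eqref{ineq-ldot}.

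For the length bound \eqref{ineq-l}, I would integrate the relation $\dot l(t) = r_{\rm g} e_1^\top X(t)$ in time and estimate $|l(t) - l(0)| \le r_{\rm g} \int_0^t \|X(s)\| ds$. To close this, I would invoke the differential inequality \eqref{vdotbound} from Lemma~1: on the region where $V$ is small enough that $\beta V^{1/2} \le \alpha/2$, we get $\dot V \le -(\alpha/2) V$, so $V(t) \le V(0) e^{-(\alpha/2) t}$, and therefore $\int_0^\infty \|X(s)\| ds \le \int_0^\infty (V(s)/(d_2\lambda_{\min}(P)))^{1/2} ds \le (V(0)/(d_2 \lambda_{\min}(P)))^{1/2} \cdot \frac{4}{\alpha}$, which is $O(V(0)^{1/2})$. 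Thus by choosing $M_1$ small enough, $|l(t) - l(0)|$ stays below $\bar l - \bar l_0$ for any fixed initial-length bound, and combined with a lower-bound argument (either $l(0) > 0$ is part of the admissible set, or one shows $l(t)$ cannot reach zero because $c_{\rm c} \to c_\infty$ keeps $\dot l$ small), we obtain $0 < l(t) \le \bar l$. One must be slightly careful about circularity here, since Lemma~1 itself assumed \eqref{ineq-l}--\eqref{ineq-ldot}; the clean way is a continuity/bootstrap argument: let $T^* = \sup\{t : \eqref{ineq-l}, \eqref{ineq-ldot} \text{ hold on } [0,t]\}$, show $T^* > 0$, and then show that if $T^* < \infty$ the estimates just derived contradict maximality by keeping the constraints strictly interior at $T^*$.

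The bootstrap/continuity argument is where I expect the main subtlety to lie, because it requires simultaneously that $V$ decays (which needs the constraints) and that the constraints hold (which we want to derive from $V$ decaying). The resolution is standard but must be stated carefully: on $[0, T^*)$ Lemma~1 applies, so $V$ obeys \eqref{vdotbound}; if additionally $V(0) < M_1$ with $M_1$ chosen so that $\beta V^{1/2} < \alpha/2$ persists (which it does since $V$ is then decreasing) and so that the two threshold inequalities above are strict, then at $t = T^*$ both \eqref{ineq-l} and \eqref{ineq-ldot} would still hold with strict inequality, contradicting the definition of $T^*$ unless $T^* = \infty$. I would also need the elementary observation that $X^\top X$, $\|w\|^2$, etc. are each dominated by $V$ up to explicit positive constants (from $d_1, d_2 > 0$ and \eqref{ineq-XPX}), so that "$V$ small" genuinely controls every quantity appearing in the constraints; this is immediate from the structure of \eqref{Vtotal} as a sum of nonnegative terms with positive coefficients.

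Putting it together, I would set $M_1 := \min\{\, (\alpha/(2\beta))^2,\ d_2 \lambda_{\min}(P) \bar v^2 / r_{\rm g}^2,\ \delta_l \,\}$ where $\delta_l$ is the explicit threshold coming from the $\int \|X\|$ bound that keeps $l(t)$ inside $(0, \bar l]$, and then verify each of the three consequences in turn as sketched above. The only real work is bookkeeping the constants; the conceptual content is the decay estimate from Lemma~1 plus the continuity argument. I do not anticipate needing any tool beyond what is already in the excerpt, namely \eqref{vdotbound}, \eqref{ineq-XPX}, and the identity $\dot l = r_{\rm g} e_1^\top X$.
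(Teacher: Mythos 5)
Your handling of \eqref{ineq-ldot} matches the paper: $\dot l = r_{\rm g} z_1 = r_{\rm g} e_1^\top X$, and $|X|^2 \le V/(d_2 \lambda_{\min}(P))$ via \eqref{ineq-XPX} and the structure of \eqref{Vtotal}, so a smallness threshold on $V$ gives the bound pointwise in time. But for \eqref{ineq-l} you have missed the one observation that makes the lemma trivial: by \eqref{xdef}, the \emph{second component} of the ODE state is $z_2(t) = l(t) - l_{\rm s}$, so $|l(t) - l_{\rm s}| \le |X(t)|$ for exactly the same reason that $|\dot l(t)|/r_{\rm g} \le |X(t)|$. The paper's proof simply takes $r = \min\{\bar v/r_{\rm g},\, l_{\rm s},\, \bar l - l_{\rm s}\}$ and $M_1$ so that $V < M_1$ forces $|X| < r$; then $l_{\rm s} - r < l(t) < l_{\rm s} + r$ gives $0 < l(t) \le \bar l$ with no dynamics involved. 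Both conditions are static consequences of $|X|$ being small, which is why the lemma is stated as a property of the \emph{region} $\Omega_1$ in state space rather than of trajectories.

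Your alternative route for \eqref{ineq-l} --- integrating $\dot l$ in time, invoking the decay estimate from Lemma~1, and closing with a bootstrap --- does not prove the statement as written and imports exactly the circularity you flag. Lemma~2 is the ingredient that lets Lemma~3 verify the hypotheses \eqref{ineq-l}--\eqref{ineq-ldot} of Lemma~1 and establish invariance of the sublevel set; if Lemma~2 itself consumes \eqref{vdotbound}, the separation of the two lemmas collapses and you are forced into the continuation argument, which moreover yields a conclusion depending on $l(0)$ and the trajectory history rather than on membership in $\Omega_1$ alone. Your bootstrap could probably be made rigorous as a merged proof of Lemmas~2 and~3 together, but it is strictly more work and proves a weaker (trajectory-wise) statement than the pointwise region statement the paper needs. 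The fix is one line: use $l(t) = l_{\rm s} + e_2^\top X(t)$. (Minor additional note: your intermediate bound $\|X\| \le (V_3/(d_2\lambda_{\min}(P)))^{1/2}$ has a spurious $d_2$; the correct chain is $|X|^2 \le V_3/\lambda_{\min}(P) \le V/(d_2\lambda_{\min}(P))$, and your final constant is right.)
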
 
\begin{proof} By using \eqref{xdef} and \eqref{sys5}, $X(t)$ can be described as
    $X(t)=\left[\begin{array}{cc}
\fr{\dot l(t)}{r_{\rm g}}&
l(t)-l_{\rm s}
    \end{array}\right]^\top$.
Thus, for some $r>0$, if $| X|<r$ then both the following two inequalities  hold:
\begin{align}
    \left|\frac{\dot{l}(t)}{r_{\rm g}}\right|< r, \ \ \big|l(t)-l_{\rm s}\big|< r. 
\end{align}
The former inequality tells that if $r < \frac{\bar v}{r_{\rm g}}$ then the condition \eqref{ineq-l} holds. Moreover, the latter inequality yields $- r + l_{\rm s} < l(t) < r + l_{\rm s}$, and thus if both $r < l_{\rm s}$ and $r < \bar l - l_{\rm s}$ hold, then the condition \eqref{ineq-ldot} holds.   
Therefore, the constant $r>0$ is chosen as  
$
        r=\min \left\{\frac{\bar{v}}{r_{\rm g}}, l_{\rm s}, \bar{l}-l_{\rm s}\right\}.
$
Since $|X|^2 \leq \frac{1}{\lam_{\rm min}(P)} X^\top P X \leq \fr{d_2}{\lam_{\rm min}(P)} V$, we deduce that by setting $M_1 = \fr{\lam_{\rm min}(P)}{d_2} r^2$, if $V(t) < M_1 $ holds then $| X| < r$ and thus the conditions \eqref{ineq-l} and \eqref{ineq-ldot} are satisfied, by which we can conclude Lemma 2. 
\end{proof} 

\begin{lemma}
There exists a positive constant $M>0$ such that if $V(0) < M$ then the conditions \eqref{ineq-l} and \eqref{ineq-ldot} are satisfied and the following norm estimate holds: 
\begin{align} \label{V-decay} 
    V(t) \leq V(0) \exp\left(- \fr{\alpha}{2} t \right).  
\end{align}
\end{lemma}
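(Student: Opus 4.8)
The plan is to combine Lemma 1 and Lemma 2 via a standard invariant-set argument. First I would invoke Lemma 1 and Lemma 2: on the region $\Omega_1 = \{V < M_1\}$ the conditions \eqref{ineq-l}--\eqref{ineq-ldot} hold, hence the estimate $\dot V \leq -\alpha V + \beta V^{3/2}$ is valid there. The key algebraic observation is that if at some time $V(t) \le \frac{\alpha^2}{4\beta^2}$, then $\beta V^{3/2} = \beta V^{1/2} \cdot V \le \frac{\alpha}{2} V$, so that $\dot V \le -\frac{\alpha}{2} V$ at that instant. Therefore I would set $M := \min\left\{M_1, \frac{\alpha^2}{4\beta^2}\right\}$.

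Next I would run the continuity/bootstrap argument to show $\{V < M\}$ is forward invariant. Suppose $V(0) < M$. Since $V$ is continuous in $t$, define $t^* = \sup\{t \ge 0 : V(s) < M \text{ for all } s \in [0,t]\}$. On $[0,t^*)$ we have $V < M \le M_1$, so Lemma 2 gives \eqref{ineq-l}--\eqref{ineq-ldot}, so Lemma 1 applies, and moreover $V < M \le \frac{\alpha^2}{4\beta^2}$ gives $\dot V \le -\frac{\alpha}{2} V$ on $[0,t^*)$. Integrating, $V(t) \le V(0)\exp(-\frac{\alpha}{2} t) < M$ on $[0,t^*)$. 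If $t^* < \infty$, continuity forces $V(t^*) = M$, contradicting $V(t^*) \le V(0) e^{-\alpha t^*/2} < M$. Hence $t^* = \infty$, the conditions \eqref{ineq-l}--\eqref{ineq-ldot} hold for all $t \ge 0$, and \eqref{V-decay} holds for all $t \ge 0$.

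The main obstacle — really the only subtle point — is justifying that $V$ is genuinely continuous (indeed absolutely continuous) in $t$ so that the differential inequality integrates to the claimed bound and the bootstrap closes; this requires knowing the closed-loop target system \eqref{tar-PDE}--\eqref{tar-ODE} has a well-posed solution with enough regularity ($w \in H_1$ in space, with the relevant time-traces making $\dot V$ meaningful), which the paper treats as given. A secondary technical wrinkle is that the hypotheses of Lemma 1 also prescribe the specific value $\bar v = \min\{g/(4\gamma), D/(8\bar l)\}$; I would simply note that the constants $\bar v, \bar l$ in Lemma 2 can be taken consistently with this choice (shrinking $r$, hence $M_1$, accordingly), so that Lemma 1's conclusion is available on $\Omega_1$ without circularity.

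Finally, once Lemma 3 is established, Theorem 1 follows by translating back: the norm equivalences — $\lambda_{\min}(P)X^\top X \le V_3/d_2$-type bounds from \eqref{ineq-XPX}, Poincaré's inequalities \eqref{poincare-1} relating $\|w\|, \|w_x\|, \|w_{xx}\|$, and the boundedness of the backstepping transformation \eqref{bkst} and its inverse (with kernels bounded uniformly in $t$ thanks to \eqref{ineq-l}) — yield constants $c_1, c_2 > 0$ with $c_1 Z(t) \le V(t) \le c_2 Z(t)$, after which $Z(t) \le (c_2/c_1) Z(0)\exp(-\frac{\alpha}{2} t)$, giving Theorem 1 with $c = c_2/c_1$, $\kappa = \alpha/2$, and $\bar M = M/c_2$.
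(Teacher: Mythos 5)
Your proposal is correct and follows essentially the same route as the paper: choose $M=\min\{M_1,\alpha^2/(4\beta^2)\}$ so that Lemma~2 activates Lemma~1 and the cubic term is dominated, yielding $\dot V\leq-\frac{\alpha}{2}V$ and invariance of $\{V<M\}$. Your explicit continuity/bootstrap argument at $t^*$ merely makes rigorous what the paper states tersely ("since \eqref{V-decay} is monotonically decreasing... the region $\Omega$ is shown to be an invariant set"), so the two proofs coincide in substance.
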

\begin{proof}
For a positive constant $M>0$, let $\Omega := \{ (w,X) \in H_1 \times \R^2 | V(t) < M\}$. By Lemma 2, it is easily shown that if $M \leq M_1$ then $\Omega \subset \Omega_1$, and thus the conditions \eqref{ineq-l} and \eqref{ineq-ldot} are satisfied in the region $\Omega$. Thus, by Lemma 1, the norm estimate \eqref{vdotbound} holds. Moreover, by setting $M \leq \fr{\alpha^2}{4 \beta^2}$, we can see that applying $V(t) < M$ to \eqref{vdotbound} leads to 
\begin{align}
    \dot V \leq - \fr{\alpha}{2} V, 
\end{align}
by which the norm estimate \eqref{V-decay} is deduced. Since \eqref{V-decay} is a monotonically decreasing function in time, by setting $M = \min\{M_1, \fr{\alpha^2}{4 \beta^2}\} $, the region $\Omega$ is shown to be an invariant set. Thus, if $V(0) < M$, then $V(t) < M$ for all $t \geq 0$, and one can conclude with Lemma 3. 
\end{proof}

Finally, owing to Lemma 3, and the equivalent norm estimate in the $H_1$-norm between the target and the closed-loop system, the local stability of the closed-loop system is proved, which completes the proof of Theorem 1.

\section{Numerical Simulation}
\begin{figure}[t]
    \centering
    \subfloat[\textbf{The axon length converges to the desired length. }\label{subfig-2}]{%
\includegraphics[width=0.89\linewidth]{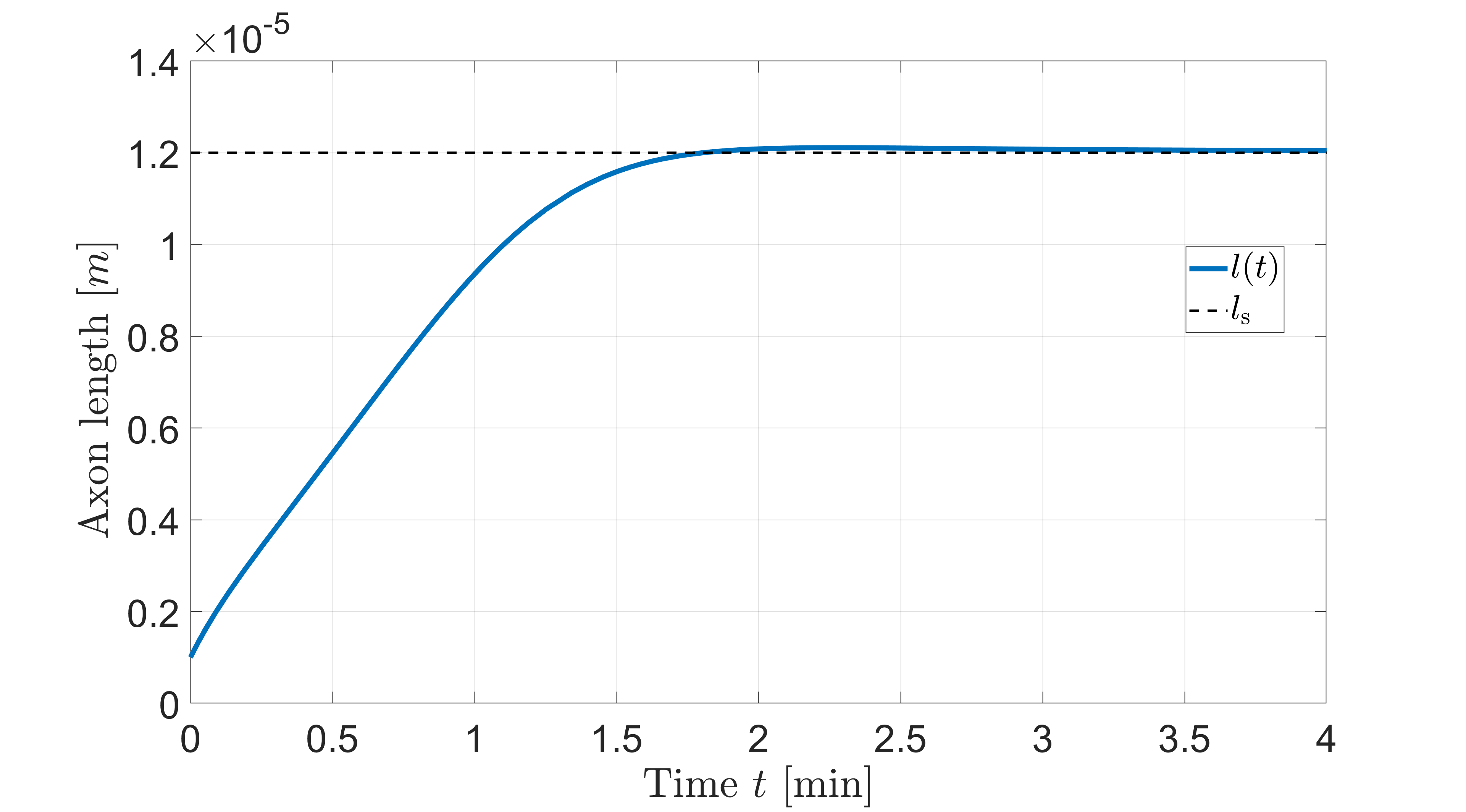}}
    \vfill
\subfloat[\textbf{The tubulin concentration converges to the steady-state. }\label{subfig-1}]{ \includegraphics[width=0.90\linewidth]{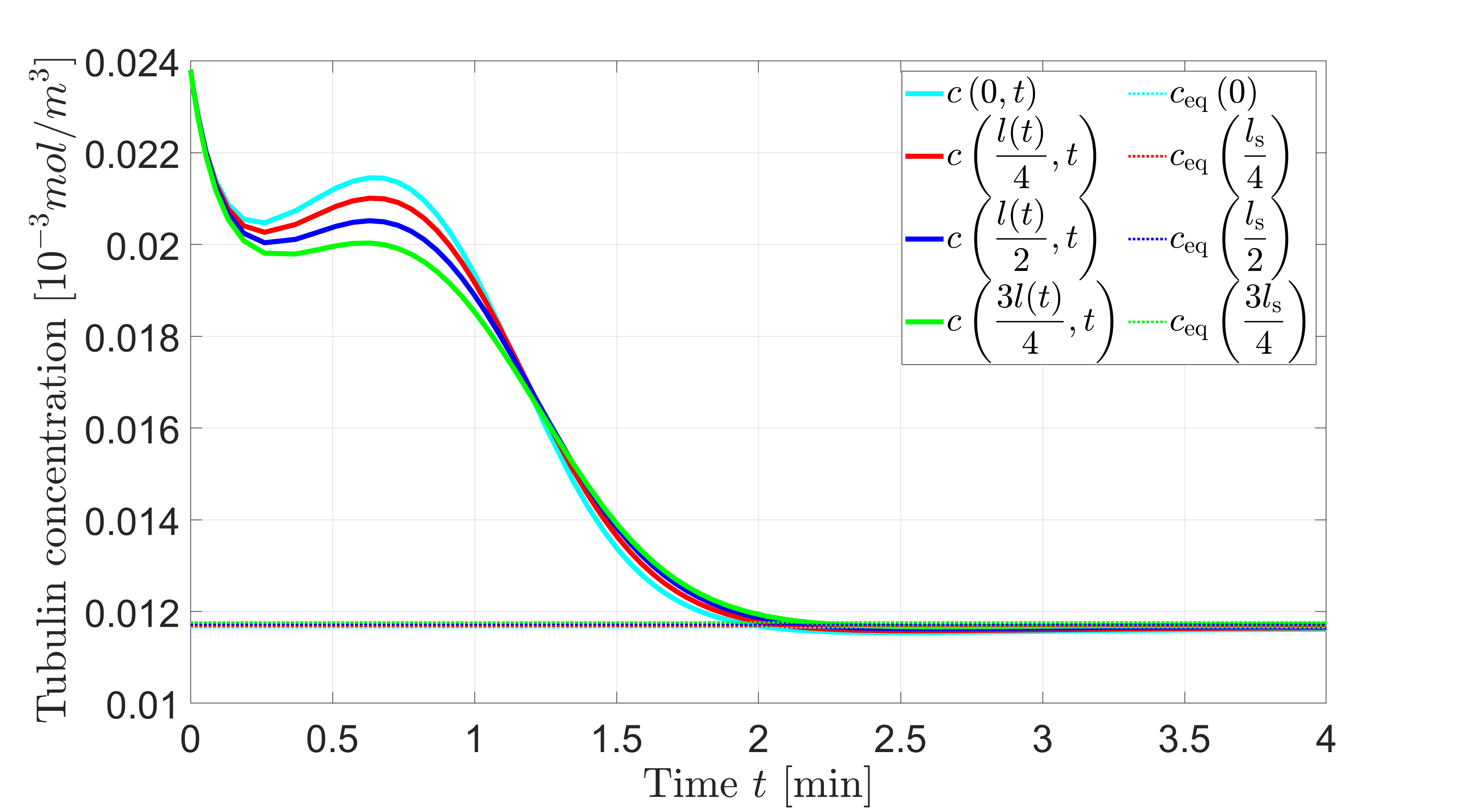}
}
    \caption{Closed-loop response under the designed control law. }
\end{figure}

\begin{table}[t]
\hfill
	\caption{\label{tab:initial}Biological constants and control parameters}
	\centering
	\begin{tabular}{cccc} 
		\hline
		Parameter  & Value  & Parameter & Value \\
		\hline
		$D$ & $10\times10^{-6}  m^2/s$& $\gamma$ &  $10^4$\\
		$a$& $1\times 10^{-8}  m/s$ & $l_{\rm c}$ & $4\mu m$\\
		$g$& $5\times 10^{-7} \ s^{-1}$ & $l_s$ & $12\mu m$\\
		$r_{\rm g}$& $1.783\times 10^{-5} \ m^4/(mol s)$ & $l_0$& $1\mu m$ \\
		$c_{\infty}$ &  $0.0119  \ mol/m^3$ & $k_1$& $-0.1$\\
		$\tilde{r}_{\rm g}$ & $0.053$ & $k_2$ & $10^{13}$ \\
 \hline
	\end{tabular}
\end{table} 
Simulation is performed for the axon growth dynamics \eqref{sys1}-\eqref{sys5} under the designed control law \eqref{real-input}. We use the biological constants given in \cite{diehl2014one}, and choose the control parameters, as shown in Table \ref{tab:initial}. The initial tubulin concentration is set as a constant profile $c_0(x) =2c_{\infty}$, and the initial axon length is set as $l_0=1 \mu m$.

Fig. \ref{subfig-2} shows that the axon length converges to the desired length $l_{\rm s}$. In addition, Fig. \ref{subfig-1} depicts that the tubulin concentration converges to the steady-state solution. Hence, we observe that the simulation result is consistent with our theoretical results.

\section{Conclusion} \label{sec:conclusion} 
In this paper, a boundary feedback control for an axonal growth model governed by a coupled PDE-ODE with a moving boundary is studied. The backstepping transformation is utilized for the original system to convert it to the target system which has a stable system matrix in ODE. The gain kernel solutions in the transformation are obtained, and the boundary feedback control law is designed explicitly. Using Lyapunov's method, we first prove that the target system is locally exponentially stable in the spatial $H_1$-norm and then we prove local stability of
the original coupled PDE-ODE system of axonal growth model. Showing the local stability without applying linearization will be studied in future. 


\bibliographystyle{IEEEtran}
\bibliography{BIB_CDC21.bib}

\end{document}